\newtheorem{theorem}{Theorem}[section]
\newtheorem{corollary}[theorem]{Corollary}
\newtheorem{proposition}[theorem]{Proposition}
\newtheorem{problem}[theorem]{Problem}
\theoremstyle{definition}
\newtheorem*{xrem}{Remark}
\numberwithin{equation}{section}
\newcommand{\Set}[2]{
\left\{#1\,\middle|\,#2\right\}}
\newcommand{\one}{\mathbbm{1}}
\newcommand{\two}{\mathbbm{2}}
\newcommand{\R}{\mathbb{R}}
\newcommand{\M}{\mathcal{M}}
\newcommand{\B}{\mathcal{B}}
\newcommand{\X}{\mathcal{X}}
\newcommand{\W}{\mathcal{W}}
\newcommand{\dGH}{d_{\mathrm{GH}}}
\newcommand{\dis}{\mathop{\mathrm{dis}}\nolimits}
\newcommand{\diam}{\mathop{\mathrm{diam}}\nolimits}
\newcommand{\GHC}{\mathop{\mathrm{GHC}}\nolimits}
\newcommand{\Sum}{\mathop{\mathfrak{S}}\nolimits}
\begin{document}

\title{A sequence of compact metric spaces and an isometric embedding into the Gromov-Hausdorff space}

\author{Takuma Byakuno
}
\date{}
\maketitle

%%%%%%%%
\renewcommand{\thefootnote}{}
\thefootnote{2020 \emph{Mathematics Subject Classification}: Primary 30L05; Secondary 54E35, 53C23.}
\thefootnote{\emph{Key words and phrases}: Gromov-Hausdorff space, Isometric embeddings.}
\renewcommand{\thefootnote}{\arabic{footnote}}
\setcounter{footnote}{0}
%%%%%%%%

\begin{abstract}
For a convergent series with positive terms, we prove that the $\ell^\infty$ product space of bounded subspaces of the Gromov-Hausdorff space can be isometrically embedded into the Gromov-Hausdorff space, where each subspace consists of compact metric spaces with the diameter less than or equal to the term.
\end{abstract}

\section{Introduction}
The {\it Gromov-Hausdorff space} is a metric space consisting of a certain set $\M$ and a distance function $\dGH$. 
All compact metric spaces belong essentially to the set $\M$, and $\dGH(X, Y)$ is determined by the difference between the shapes of two compact metric spaces $X, Y$.\\

The distance function $\dGH$ is called the {\it Gromov-Hausdorff distance}, and we can easily calculate the Gromov-Hausdorff distance in several cases.
We can define a compact metric space $tX\in\M$ as a scale transformation of $X\in\M$ by $t>0$:
Let $d_X$ be a distance function on $X$. 
Then $tX$ and its distance function $d_{tX}$ are defined by
$tX=X$, and $d_{tX}(x,y)=t d_X(x,y)$ respectively. 
It is straightforward to see 
\begin{itemize}
\item[(1)]
$\dGH(X,\one)=2^{-1}\diam X$ for $X\in\M$,
\item[(2)]
$\dGH(tX,tY)=t\dGH(X,Y)$ for $X,Y\in\M$ and $t>0$,
\item[(3)]
$\dGH(tX,sX)=2^{-1}|t-s|\diam(X)$ for $X\in\M$ and $s,t>0$,
\end{itemize}
where $\one$ is a one-point metric space. \\

Now, we can consider the diameter as a function $\diam:\M\to\R$, and we can show this function is a 2-lipschitz function by (1):
\[
|\diam(X)-\diam(Y)|\leq 2\dGH(X,Y)\,.
\]
We can also consider the Gromov-Hausdorff space as a cone with the vertex at $\one$,
and the diameter function $\diam:\M\to\R$ expresses the depth of this cone:
If we define the subset  $\B_s$, $\B_{\leq s}$ by
\[
\B_s=\Set{X\in\M}{\diam(X)=s}
\ ,\quad
\B_{\leq s}=\Set{X\in\M}{\diam(X)\leq s}\,,
\]
then $\B_{\leq s}$ is a subcone over $\B_s$, See Figure \ref{figure1}.
The isometry group of the Gromov-Hausdorff space is trivial by \cite{Tuzhilin2}.\\
\begin{figure}[h]
\centering
\includegraphics[width=0.6\textwidth]{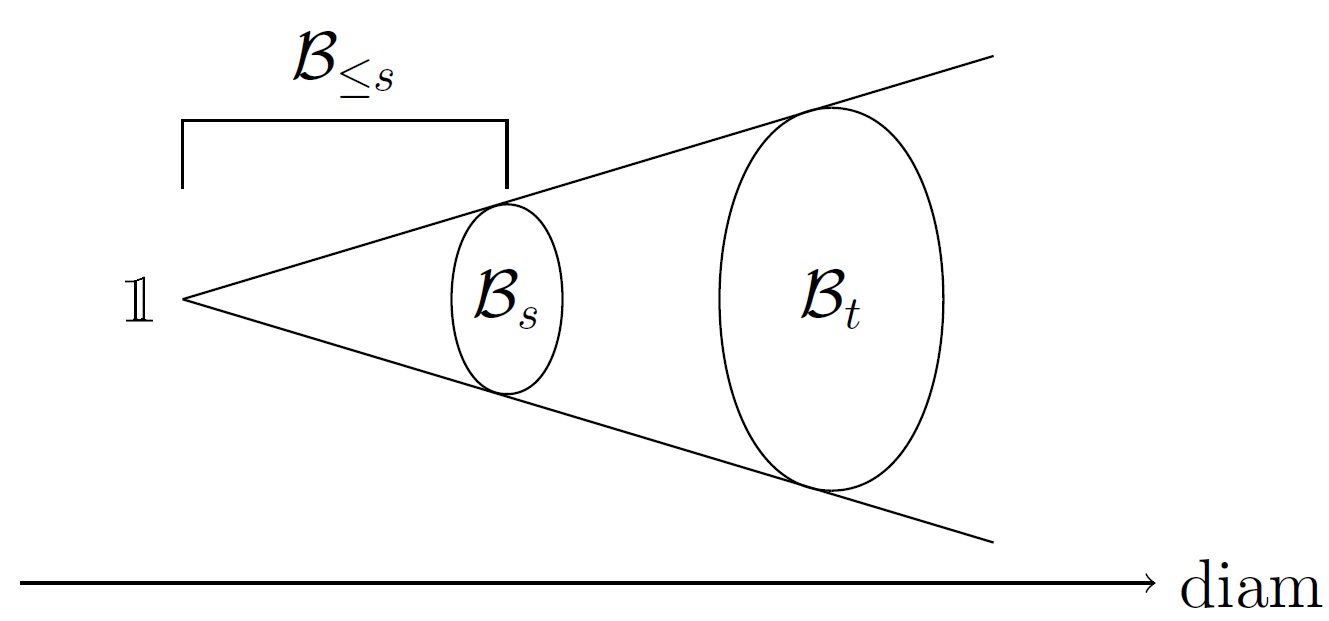}
\caption{The Gromov-Hausdorff space.}
\label{figure1}
\end{figure}

It is known that the Gromov-Hausdorff space $\M$ has a dense countable set, and $\M$ is therefore separable. 
In addition, we can show that $\M$ is complete, but $\M$ is not isometric to the Urysohn universal space. 
See \cite[Section 3]{Tuzhilin1} and \cite{Tuzhilin2}.
Recently, Nakajima, Yamauchi, and Zava studied the topological dimension of $\M$ in \cite{Nakajima} and calculated the topological dimension of $\M_{\leq n}$, which is a subspace consisting of compact metric spaces with at most $n$ points.
\\

In 2017, Iliadis, Ivanov, and Tuzhilin \cite{Tuzhilin1} showed that all finite metric spaces can be isometrically embedded into the Gromov-Hausdorff space, which means this space is universal for a class of all finite metric spaces.\\

According to \cite{Tuzhilin1}, there are many open problems related to the geometrical properties of $\M$. 
Among them, in this paper, we focus on
\begin{problem}
Can we isometrically embed all compact metric spaces into the Gromov-Hausdorff space $\M$? 
\end{problem}
Iliadis, Ivanov, and Tuzhilin's result can be considered as a partial answer to this problem, and this partial answer can be generalized to the following as an easy application of \cite[Theorem 4.1]{Tuzhilin1}.
\begin{proposition}
\label{EBs2GHs}
All bounded metric subspaces of an $\ell^\infty$ normed space $\R^n$ can be isometrically embedded into the Gromov-Hausdorff space. 
\end{proposition}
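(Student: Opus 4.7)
The plan is to construct an explicit map $\Phi : A \to \M$, $a \mapsto X_a$, where each $X_a$ is a finite metric space that encodes the coordinates of $a$, and then to verify that $\Phi$ is an isometry. This generalizes the construction behind \cite[Theorem 4.1]{Tuzhilin1} from a single finite metric space to the uncountable family $A$, using the coordinate structure of the $\ell^\infty$ norm on $\R^n$.

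After translating, one may assume $A \subset [0,D]^n$ for some $D>0$. For each $a=(a_1,\dots,a_n)\in A$, I take $X_a$ to be the finite metric space on the common vertex set $V=\{p,q_1,r_1,\dots,q_n,r_n\}$ of $2n+1$ points, in which $d_{X_a}(q_i,r_i)=2a_i+L_i$ for well-separated constants $L_1,\dots,L_n$ (with $L_1\ge 2D$ and $\min_{i\ne j}|L_i-L_j|>4D$), and every other pairwise distance equals a sufficiently large constant $C$. A routine triangle-inequality check shows that with $C$ large enough, each $X_a$ is a valid element of $\M$.

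For the upper bound, the identity correspondence on $V$ between $X_a$ and $X_b$ has distortion $\max_i |(2a_i+L_i)-(2b_i+L_i)|=2\|a-b\|_\infty$, giving $\dGH(X_a,X_b)\le\|a-b\|_\infty$. The core of the argument is the matching lower bound. First, I would check that any correspondence of distortion below $\min_i L_i$ admits a selection that is a bijection of $V$ with no larger distortion, reducing the problem to bijections of $V$. Second, taking $C$ much larger than $2D+\max_i L_i$ ensures that any bijection of small distortion must send the "short" edges (of lengths $2a_i+L_i$) to short edges, hence fix $p$ and induce a permutation $\pi$ on the pairs $\{q_i,r_i\}$. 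Third, the separation $\min_{i\ne j}|L_i-L_j|>4D$ forces any non-identity $\pi$ to yield distortion strictly greater than $2D\ge 2\|a-b\|_\infty$, so the identity bijection is optimal and gives distortion exactly $2\|a-b\|_\infty$.

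The hardest step is the lower bound, specifically the symmetry-breaking part: one must calibrate the constants $L_i$ (to distinguish the $n$ coordinate directions) and $C$ (to distinguish short from long edges) so that all three reductions above work uniformly in $a,b\in A$. Once calibrated, combining the two inequalities yields $\dGH(X_a,X_b)=\|a-b\|_\infty$ for every $a,b\in A$, so $\Phi$ is the required isometric embedding $A\hookrightarrow\M$.
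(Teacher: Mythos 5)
Your construction is correct: the spaces $X_a$ are genuine metric spaces (no triangle ever contains two of the ``short'' edges, so only the comparison $2a_i+L_i\le 2C$ matters), the identity bijection gives $\dGH(X_a,X_b)\le\|a-b\|_\infty$, and for the lower bound one may assume $\dis(R)<2D\le\min_iL_i$ (otherwise there is nothing to prove), in which case $R$ is literally a bijection, must match short edges to short edges once $C>2D+\max_iL_i+2D$, and the separation $|L_i-L_j|>4D$ rules out any nontrivial permutation of the pairs; the surviving bijections have distortion exactly $2\|a-b\|_\infty$. Note, though, that the paper itself gives no proof of this proposition: it is stated as ``an easy application of \cite[Theorem 4.1]{Tuzhilin1}'', i.e.\ of Iliadis--Ivanov--Tuzhilin's result on the local structure of $\M$ near large simplexes, where the Gromov--Hausdorff distance between $n$-point spaces is computed (up to the factor $\frac12$ and up to relabelings) by the $\ell^\infty$-difference of their distance vectors; the intended argument would encode the coordinates of $a$ as perturbations of selected distances of such a space, with generically separated offsets playing exactly the role of your $L_i$. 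So your route is the same in spirit (coordinates written into distances, symmetry broken by well-separated constants) but is self-contained: you prove the needed distance formula by hand for a special $(2n+1)$-point configuration instead of invoking the cited local structure theorem. What the citation buys is brevity and a statement valid in a whole neighborhood of the simplex; what your argument buys is independence from \cite{Tuzhilin1} and an elementary, fully explicit verification, at the modest cost of the calibration bookkeeping for $C$, $D$, and the $L_i$, which you have set up consistently.
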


In this paper, we will prove the following theorem.
\begin{theorem}
\label{theorem1}
Let $c=\sum_{n=1}^\infty r_n$ be a convergent series with $r_n>0$.
Then the direct product set $\B_{\leq r}=\prod_{n=1}^\infty\B_{\leq r_n}$ equipped with $\ell^\infty$ (or supremum) distance can be embedded into $\B_{36c-3r_1}$.
Namely, we can construct a map $\Sum_r:\B_{\leq r}\to\B_{36c-3r_1}$ satisfying
\[
\dGH(\Sum_r(X),\Sum_r(Y))=\max_{n=1,2,\ldots}\dGH(X_n,Y_n)
\]
for all $X=(X_n)_{n=1}^\infty, Y=(Y_n)_{n=1}^\infty\in\B_{\leq r}$.
\end{theorem}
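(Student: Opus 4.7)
The plan is to realize $\Sum_r(X)$ as a \emph{metric tree of spaces}: glue disjoint copies of $X_1, X_2, \ldots$ to a common auxiliary ``spine'' so that for every $n \neq m$ and every choice of $x \in X_n$, $y \in X_m$, the distance $d(x, y)$ equals a fixed value $D_{n,m}$ depending only on the indices and on the numbers $r_1, r_2, \ldots$. Concretely, I would first replace $X_n$ by its ``cone'' $\tilde X_n = X_n \cup \{b_n\}$ with $d(b_n, x) = \rho_n$ for every $x \in X_n$, where $\rho_n$ is a constant at least $\tfrac{1}{2}\diam(X_n)$ (so as to preserve the triangle inequality), then identify the vertex $b_n$ with a point $g(n)$ of an interval $I \subset \R$, where $g(n)$ is strictly increasing and built from the partial sums of the $r_k$. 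The resulting quotient carrying the induced shortest-path metric is $\Sum_r(X)$, and its cross-piece distances take the closed form $D_{n,m} = \rho_n + \rho_m + |g(n) - g(m)|$, independent of the specific points chosen within $X_n$ and $X_m$.

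Next I would verify the basic metric properties. The triangle inequality reduces to a handful of cases (within one piece, across two, through the spine). Compactness follows because $\sum r_n < \infty$ forces $r_n \to 0$, so both $\rho_n$ and the gaps $g(n+1) - g(n)$ vanish, and the pieces accumulate at the limit point $g(\infty) \in I$. A direct calculation using the maximum value of $D_{n,m}$ (the diameter of each $X_n$ contributes at most $r_n \leq 2\rho_n$) gives the claimed diameter bound $36c - 3r_1$ for the specific calibration of $\rho_n$ and $g(n)$ that I choose.

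For the upper bound $\dGH(\Sum_r(X), \Sum_r(Y)) \leq \max_n \dGH(X_n, Y_n)$, I would take, for each $n$, a correspondence $R_n \subseteq X_n \times Y_n$ with $\dis(R_n) \leq 2\dGH(X_n, Y_n) + \varepsilon$, and paste them together with the canonical identification of the two spines to obtain a correspondence $R$ on $\Sum_r(X) \times \Sum_r(Y)$. Because the spines agree and the numbers $D_{n,m}$ coincide on both sides, any cross pair $(x, y), (x', y') \in R$ with $x \in X_n$, $x' \in X_m$, $n \neq m$ contributes zero to the distortion, as does any pair involving a spine point. Hence $\dis(R) = \sup_n \dis(R_n) \leq 2 \max_n \dGH(X_n, Y_n) + \varepsilon$, and letting $\varepsilon \to 0$ gives the upper bound.

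The main obstacle will be the reverse inequality. I would argue that any correspondence $R$ on $\Sum_r(X) \times \Sum_r(Y)$ with small distortion must respect the decomposition: pairs $(x, y) \in R$ with $x \in X_n$ have $y \in Y_n$, up to negligible boundary effects on the spine. The key rigidity step: if $x, x' \in X_n$ are paired via $R$ with $y \in Y_m$ and $y' \in Y_{m'}$, then $|d(x, x') - d(y, y')| \leq \dis(R)$; since $d(x, x') \leq r_n$ while $d(y, y') = D_{m, m'}$ will exceed $r_n + \dis(R)$ whenever $m \neq m'$ (by design), we are forced to have $m = m'$. Iterating over pieces produces a permutation $\sigma$ of $\mathbb{N}$, and the strict monotonicity of $g$ makes $(n, m) \mapsto D_{n,m}$ rigid enough that any low-distortion $R$ must have $\sigma = \mathrm{id}$. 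Once this is established, the restriction of $R$ to $X_n \times Y_n$ is a correspondence with distortion at most $\dis(R)$, so $\dis(R) \geq 2\dGH(X_n, Y_n)$ for every $n$ and hence $\dis(R) \geq 2 \max_n \dGH(X_n, Y_n)$. The delicate point is the calibration of $\rho_n$ and $g(n)$: the gaps $D_{m, m'} - r_n$ must exceed the worst-case distortion $2 \max_n \dGH(X_n, Y_n) \leq 2 r_1$, and it is precisely this calibration that produces the factor $36$ in the diameter bound $36c - 3r_1$.
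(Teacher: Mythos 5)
Your construction and your upper-bound argument are essentially the paper's: glue disjoint pieces along a line so that cross-piece distances depend only on the indices, then paste correspondences $R_n$ together to get $\dis(R)\leq\sup_n\dis(R_n)$. (One small repair there: the cone radius $\rho_n$ must be chosen as a function of $r_n$ alone, e.g.\ $\rho_n=r_n$, not of $\diam(X_n)$; otherwise $D_{n,m}$ differs between $\Sum_r(X)$ and $\Sum_r(Y)$ and the cross pairs no longer contribute zero distortion.)

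The genuine gap is in the reverse inequality. You require, ``by design,'' that $D_{m,m'}>r_n+\dis(R)$ for all $m\neq m'$, and from this you extract a permutation $\sigma$ of $\mathbb{N}$. This calibration is impossible: compactness of $\Sum_r(Y)$ forces the pieces to accumulate at the end of the spine, so $D_{m,m+1}\leq\rho_m+\rho_{m+1}+\bigl(g(m+1)-g(m)\bigr)\to 0$, while $\dis(R)$ is only assumed to be $<2S$ with $S=\sup_n\dGH(X_n,Y_n)$ a fixed positive number (possibly of the order of $r_1$; note also that your bound $2\max_n\dGH(X_n,Y_n)\leq 2r_1$ tacitly assumes the $r_n$ are nonincreasing, which is not part of the hypothesis). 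Hence a low-distortion correspondence may scramble all sufficiently small, far-out pieces arbitrarily, no permutation of $\mathbb{N}$ exists in general, and the further step ``strict monotonicity of $g$ forces $\sigma=\id$'' (i.e.\ excluding an orientation-reversing matching) is only asserted, not proved. The paper's proof avoids exactly this trap: since $\dGH(X_n,Y_n)\leq r_n\to 0$, the supremum is attained at some index $K$; the metric is scaled so that the two gaps adjacent to the $K$-th piece are $3r'_{K-1},\,3r'_K\geq 3r_K>2r_K\geq 2S$; and rigidity is proved \emph{only} for the piece $X_K$, after pinning down position and orientation by means of three one-point anchors $X_0,X_\omega,X_{\omega+1}$ placed at deliberately asymmetric distances ($r'_0=2c$, $r'_\omega=8c$), so that $R\cap(X_K\times Y_K)$ is a correspondence whose distortion contradicts $\dis(R)<2S$. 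Your sketch is missing both the attainment of the supremum and this localization of the rigidity argument to the maximizing index; as stated, with the global uniform-gap requirement, it cannot be completed.
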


Ishiki showed that the Hilbert cube can be topologically embedded into $\M$ in \cite{Ishiki}.
We can show this proposition by Theorem \ref{theorem1} since $\B_{\leq r}$ includes an isometric copie of $\prod_{n=1}^\infty[0,r_n]$ endowed with $\ell^\infty$ distance. 
Theorem \ref{theorem1} also implies
\begin{corollary}
\label{theorem2}
Let $c=\sum_{n=1}^\infty r_n$ be a convergent series with $r_n>0$.
Then a subspace $\Set{(x_n)_{n=1}^\infty}{0\leq x_n\leq r_n}$ of the space of bounded sequences $\ell^\infty$ can be isometrically embedded into the Gromov-Hausdorff space.
\end{corollary}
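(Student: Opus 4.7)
The plan is to deduce Corollary \ref{theorem2} directly from Theorem \ref{theorem1} by producing, for each $n$, an isometric embedding of $[0,r_n]$ into a ball $\B_{\leq s_n}$, assembling them coordinate-wise, and then invoking the theorem on the resulting $\ell^\infty$ product.

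First I would build the single-coordinate embedding. Property (3) from the introduction says that for any fixed $X\in\M$, the map $t\mapsto tX$ satisfies $\dGH(tX,sX)=\tfrac12|t-s|\diam(X)$. Taking $X=\two$ (the two-point space of diameter $1$) and reparametrising by $x\mapsto 2x$ therefore gives
\[
\phi_n:[0,r_n]\to\B_{\leq 2r_n},\qquad \phi_n(x)=2x\cdot\two,
\]
with $\diam(2x\cdot\two)=2x\leq 2r_n$ and $\dGH(\phi_n(x),\phi_n(y))=|x-y|$. This is the desired isometric embedding of one factor (with the convention that $0\cdot\two=\one$).

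Next I would package these coordinate-wise. The map
\[
\Phi:\Set{(x_n)_{n=1}^\infty}{0\leq x_n\leq r_n}\longrightarrow\prod_{n=1}^\infty\B_{\leq 2r_n},\qquad (x_n)\mapsto(\phi_n(x_n)),
\]
is an isometric embedding with respect to the $\ell^\infty$ norm on the domain and the $\ell^\infty$ product $\dGH$-distance on the target, because the supremum metric is determined coordinate-wise and each $\phi_n$ is an isometry. Finally I would apply Theorem \ref{theorem1} to the summable sequence $(2r_n)_{n=1}^\infty$ (whose sum is $2c$) to obtain an isometric embedding $\Sum_{2r}:\prod_{n=1}^\infty\B_{\leq 2r_n}\to\B_{72c-6r_1}\subset\M$; the composition $\Sum_{2r}\circ\Phi$ is the required embedding.

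The only substantive subtlety --- small enough that I would not quite call it an obstacle --- is the factor of $2$ in the target ball $\B_{\leq 2r_n}$. A direct correspondence estimate gives $\dGH(X,Y)\leq\tfrac12\max\{\diam X,\diam Y\}$, so $\B_{\leq r_n}$ has $\dGH$-diameter at most $r_n/2$ and no isometric copy of $[0,r_n]$ can sit inside $\B_{\leq r_n}$. This forces the rescaling above and explains why the final ambient ball has radius $72c-6r_1$ rather than $36c-3r_1$.
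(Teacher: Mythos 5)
Your argument is correct and is essentially the paper's own proof: both embed each interval $[0,r_n]$ via rescaled two-point spaces (the paper uses $x\mapsto x\two$ with $\two=\{-1,1\}$, you use $x\mapsto 2x\cdot\two$ with a diameter-one $\two$, which is the same thing) and then apply Theorem \ref{theorem1} coordinate-wise to the $\ell^\infty$ product. In fact your bookkeeping is the more careful one: the image of $[0,r_n]$ consists of spaces of diameter $2x\leq 2r_n$, while the paper asserts that the box embeds into $\prod_{n=1}^\infty\B_{\leq r_n}$ and hence into $\B_{36c-3r_1}$; as you observe, $\B_{\leq r_n}$ has $\dGH$-diameter at most $r_n/2$, so the factor of $2$ (and the larger target ball, obtained by applying Theorem \ref{theorem1} to the summable sequence $(2r_n)$) is genuinely needed, although the corollary as stated --- an isometric embedding into $\M$ --- is unaffected.
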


\section{Preliminaries}
Let $X,Y$ be sets, and $R$ a subset of $X\times Y$.
For $A\subset X$, we define $R[A]$ as an {\it image} of $A$:
\[
R[A]=\Set{y\in Y}{\mathrm{there\ exists}\ a\in A\ \mathrm{such\ that}\ (a,y)\in R },
\]
and we define $R^{-1}$ as an {\it inverse} of $R$:
\[
R^{-1}=\Set{(y,x)}{(x,y)\in R}\subset Y\times X.
\]
Similarly, an {\it inverse image} $R^{-1}[B]$ is also defined.\\

Using this notation, we can define $\GHC(X,Y)$ by
a set of all $R\subset X\times Y$ satisfying
$R[\{x\}]\neq\emptyset$ and $R^{-1}[\{y\}]\neq\emptyset$ for all $x\in X$ and $y\in Y$.
For $R\in\GHC(X,Y)$, we have $R^{-1}\in\GHC(X,Y)$.
The {\it distortion} of $R\in\GHC(X,Y)$ is a real or infinite value 
\[
\dis(R)=\dis(X,Y;R)=\sup\Set{|d_X(x,\xi)-d_Y(y,\eta)|}{(x,y),(\xi,\eta)\in R},
\]
where $d_X,d_Y$ are the distance functions on $X,Y$, respectively.\\

The Gromov-Hausdorff distance is typically defined by using the ``Hausdorff distance'', but we can alternatively define the {\it Gromov-Hausdorff distance} between two metric spaces $X, Y$ by
\[
\dGH(X,Y)=\frac12\inf_{R\in\GHC(X,Y)}\dis(R)\,.
\]
We write the {\it diameter} of a metric space $(X,d)$ as $\diam(X)$:
\[
\diam(X)=\diam(X,d)=\sup_{x_0,x_1\in X}d(x_0,x_1).
\]
Then the Gromov-Hausdorff distance satisfies several distance axioms:
\begin{itemize}
\item[(1)]
$0\leq\dGH(X,Y)\leq\max\{\diam(X),\diam(Y)\}$ for all metric spaces $X,Y$,
\item[(2)]
$\dGH(X,Y)=\dGH(Y,X)$  for all metric spaces $X,Y$,
\item[(3)]
$\dGH(X,Y)\leq\dGH(X,Z)+\dGH(Z,Y)$  for all metric spaces $X,Y,Z$,
\item[(4)]
For all compact metric spaces $X,Y$, 
$\dGH(X,Y)=0$ if and only if $X$ is isometric to $Y$.
\end{itemize}
Therefore, $\dGH$ is a distance function on a set $\M$ satisfying 
\begin{itemize}
\item[(M1)]
For every $X\in\M$, $X$ is a (nonempty) compact metric space,
\item[(M2)]
For an arbitrary compact metric space $X$,
we can find a suitable $Y\in\M$ such that $Y$ is isometric to $X$,
\item[(M3)]
For every $X,Y\in\M$, we obtain $X=Y$ if $X$ is isometric to $Y$.
\end{itemize}
There exists such a set $\M$ since an arbitrary compact metric space can be isometrically embedded into the space of bounded sequences $\ell^\infty$.
The metric space $(\M,\dGH)$ is called the {\it Gromov-Hausdorff space}.
By the definition of $\M$, for a compact metric space $X$, there exists a unique $Y\in\M$ such that $X$ is isometric to $Y$.
Then we may consider $[X]=Y$.\\

For simplicity, we write $\omega$ for the ordinal number of the well-ordered set $\{0,1,\ldots\}$ (the set of all non-negative integers):
\[
0\leq1\leq2\leq\cdots\leq n\leq\cdots\leq\omega\leq\omega+1\leq\cdots\,.
\]
Let $\{r_\lambda\}_{\lambda\in\Lambda}$ ($\Lambda$ is a set) be a family of points such that $r_\lambda$ is a non-negative real number for each $\lambda\in\Lambda$.
Then we can also define a series of
 $\{r_\lambda\}_{\lambda\in\Lambda}$ by
\[
\sum_{\lambda\in\Lambda}r_\lambda
=\sup\Set{\sum_{\lambda\in P}r_\lambda}{P\ \mathrm{is\ a\ finite\ subset\ of}\ \Lambda}\,.
\]
This paper uses this notation only if $\Lambda$ consists of ordinal numbers.
For instance, if the sequence $(r_n)_{n=1}^\infty$ has the convergent series, we have
\[
\sum_{0\leq\lambda<\omega+1}=r_0+\left(\sum_{n=1}^\infty r_n\right)+r_{\omega}\,.
\]

\section{Proof of Theorem \ref{theorem1}}
Firstly, we shall prove Theorem \ref{theorem1} and remark on an isometric embedding of a subspace consisting of a compact metric space with constant Hausdorff dimension.
Finally, we shall prove Corollary \ref{theorem2}. 
\begin{proof}[Proof of Theorem \ref{theorem1}]
Let $c=\sum_{n=1}^\infty r_n$ be a convergent series with $r_n>0$.
Then we set $r'_n=r_n+r_{n+1}$, and
we define $\Sum_r(X)\in\M$ for $X=(X_n)_{n=1}^\infty\in\B_{\leq r}=\prod_{n=1}^\infty \B_{\leq r_n}$: Firstly, we write $X_0,X_\omega,X_{\omega+1}$ for one-point metric spaces, where $X_0,X_\omega,X_{\omega+1}$ have one points $p_0,p_\omega,p_{\omega+1}$ and distance functions $d_0,d_\omega,d_{\omega+1}$, respectively.
We also write a distance function on $X_n$ as $d_n$.
Then we put $\Sum_r(X)=\bigsqcup_{\lambda\in\Lambda}X_\lambda$, and take a unique distance function $d_X$ on $\Sum_r(X)$ such that $d_X$ is an extension of $d_\lambda$ for all $0\leq\lambda\leq\omega+1$ and satisfies
\[
d_X(x_\alpha,x_\beta)=3\sum_{\alpha\leq \lambda<\beta}r'_\lambda
\]
for $\alpha,\beta\in\Lambda$ with $\alpha<\beta$, $x_\alpha\in X_\alpha$ and $x_\beta\in X_\beta$, where $r'_0=2c$ and $r'_\omega=8c$.
This metric space $(\Sum_r(X),d_X)$ looks like Figure \ref{figure2}.
\begin{figure}[h]
\centering
\includegraphics[width=0.9\textwidth]{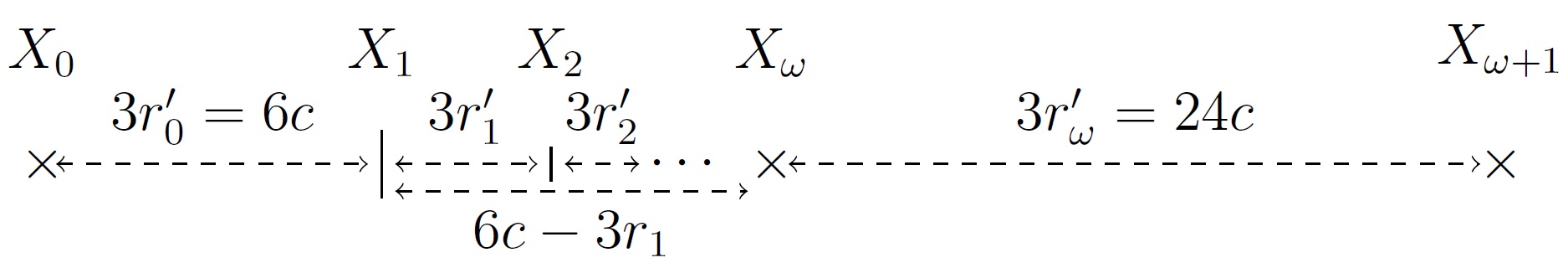}
\caption{The definition of $\Sum_r(X)$.}
\label{figure2}
\end{figure}
\\
The metric space is compact, thus we can consider $\Sum_r(X)=[\Sum_r(X)]\in\M$.
We also have $\Sum_r(X)\in\B_{36c-3r_1}$:
\[
\diam(\Sum_r(X))=d_X(p_0,p_{\omega+1})=6c+6c-3r_1+24c=36c-3r_1\,.
\]

We shall prove that $\Sum_r:\B_{\leq r}\to\B_{36c-3r_1}$ is an isometric embedding, that is, $\dGH(\Sum_r(X),\Sum_r(Y))=\sup_{n=1,2,\ldots}\dGH(X_n,Y_n)$ for $X=(X_n)_{n=1}^\infty,Y=(Y_n)_{n=1}^\infty\in\B_{\leq r}$.
We put $S=\sup_{n=1,2,\ldots}\dGH(X_n,Y_n)$, and note that $X_0=Y_0=\{p_0\}$, $X_\omega=Y_\omega=\{p_0\}$ and $X_{\omega+1}=Y_{\omega+1}=\{p_0\}$.\\

For $R_\lambda\in\GHC(X_\lambda,Y_\lambda)$ ($0\leq\lambda\leq\omega+1$), we have $R=\bigcup_{0\leq\lambda\leq\omega+1}R_\lambda\in\GHC(\Sum_r(X),\Sum_r(Y))$ and $\dis(R)\leq\sup_{\lambda\in\Lambda}\dis(R_\lambda)
=\sup_{n=1,2,\ldots}\dis(R_n)$,
since $\dis(R_0)=\dis(R_\omega)=\dis(R_{\omega+1})=0$. 
Thus $\dGH(\Sum_r(X),\Sum_r(Y))\leq S$.\\

Now, we may assume $S\neq0$.
Since $\max\{\diam(X_n),\diam(Y_n)\}\leq r_n$ and $\lim_{n\to\infty}r_n=0$, There exists a number $N$ such that $\dGH(X_n,Y_n)<S$ for $n\geq N$.
Therefore, we can take a number $K=1,2,\ldots$ such that $S=\dGH(X_K,Y_K)$.\\

Suppose, for the sake of contradiction, that $\dGH(\Sum_r(X),\Sum_r(Y))<S$.
Then there exists $R\in\GHC(\Sum_r(X),\Sum_r(Y))$ such that $\dis(R)<2S$.\\ 

For $y_0\in R[X_0]$ and $y_{\omega+1}\in R[X_{\omega+1}]$, we shall prove $y_0\in Y_0$ and $y_{\omega+1}\in Y_{\omega+1}$.
Since $\dis(R)<2S\leq 2c$,
\[
d_Y(y_0,y_{\omega+1})
\geq d_X(p_0,p_{\omega+1})-\dis(R)
>(36c-3r_1)-2c
=34c-3r_1.
\]
If there exists an ordinal number $\lambda$ such that  $y_0,y_{\omega+1}\in Y_\lambda$, we have
\[
34c-3r_1
<d_Y(y_0,y_{\omega+1})
=d_\lambda(y_0,y_{\omega+1})
\leq\diam(Y_\lambda)\leq c\,,
\]
which is a contradiction. Thus, there are ordinal numbers $\alpha,\beta$ such that $y_0\in Y_\alpha$ and $y_{\omega+1}\in Y_\beta$, but $\alpha\neq\beta$. If $\max\{\alpha,\beta\}<\omega+1$, then we have
\[
34c-3r_1
<d_Y(y_0,y_{\omega+1})
=3\sum_{\min\{\alpha,\beta\}\leq\lambda<\max\{\alpha,\beta\}}r'_\lambda
\leq 3\sum_{0\leq\lambda<\omega}r'_\lambda
=12c-3r_1\,,
\]
which is a contradiction to $c>0$. Therefore $\max\{\alpha,\beta\}=\omega+1$.
In addition, if $0<\min\{\alpha,\beta\}$, then
\[
34c-3r_1
<d_Y(y_0,y_{\omega+1})
=3\sum_{\min\{\alpha,\beta\}\leq\lambda<\omega+1}r'_\lambda
\leq 3\sum_{0<\lambda\leq\omega+1}r'_\lambda
=30c-3r_1\,,
\]
which is also a contradiction to $c>0$. As a consequence, we have $\max\{\alpha,\beta\}=\omega+1$ and $\min\{\alpha,\beta\}=0$.\\

Suppose $\alpha=\omega+1$ and $\beta=0$, that is, $y_0\in Y_{\omega+1}$ and $y_{\omega+1}\in Y_0$. 
Taking $y_\omega\in R[X_\omega]$, we have
\[
d_Y(y_\omega,y_{\omega+1})\geq d_X(p_\omega,p_{\omega+1})-\dis(R)>24c-2c=22c\,.
\]
If $y_\omega\in Y_0$, we have $c<d_Y(y_\omega,y_{\omega+1})\leq\diam(Y_0)=0$, which is a contradiction.
In addition, if $y_0\in Y_\delta$ ($0<\delta<\omega+1$), we can also obtain a contradiction:
\[
22c
<d_Y(y_\omega,y_{\omega+1})
=3\sum_{0\leq\lambda<\delta}r'_\lambda
<3\sum_{0\leq\lambda<\omega}r'_\lambda
=12c-3r_1\,.
\]
Therefore $\alpha=0$ and $\beta=\omega+1$ since $\min\{\alpha,\beta\}=0$ and $\max\{\alpha,\beta\}=\omega+1$.
It means $y_0\in Y_0$ and $y_{\omega+1}\in Y_{\omega+1}$.\\

For $x_K\in X_K$ and $y_K\in R[\{x_K\}]$, we shall prove $y_K\in Y_K$.
There exists an ordinal number $\delta$ such that $y_K\in Y_\delta$.
Since $y_0\in Y_0$ and 
\[
d_Y(y_0,y_K)\geq d_X(p_0,x_K)-\diam(R)>3\sum_{0\leq\lambda<K}r'_\lambda-2c>6c-2c=4c>0\,,
\]
we have $\delta>0$. Therefore, if $\delta\neq K$, we have $3r_K\leq\dis(R)<2S$: 
\begin{align*}
\dis(R)
\geq&|d_Y(y_0,y_K)-d_X(p_0,x_K)|
=
\left|3\sum_{0\leq\lambda<\delta}r'_\lambda-3\sum_{0\leq\lambda<K}r'_\lambda\right|\\
=&\begin{cases}
\displaystyle
3\sum_{K\leq\lambda<\delta}r'_\lambda
&\mathrm{if:}K<\delta\\
\displaystyle
3\sum_{\delta\leq\lambda<K}r'_\lambda&\mathrm{if:}\delta<K
\end{cases}
\geq\begin{cases}
\displaystyle
3r'_K
&\mathrm{if:}K<\delta\\
\displaystyle
3r'_{K-1}&\mathrm{if:}\delta<K
\end{cases}
\geq3r_K\,.
\end{align*}
However, Since we put $S=\dGH(X_K,Y_K)$, 
\[
2S=2\dGH(X_K,Y_K)\leq2\max\{\diam(X_K),\diam(Y_K)\}\leq2r_K<3r_K<2S\,.
\]
Thus, we have $\delta=K$, that is, $y_K\in Y_K$.\\

Consequently, we have $R[X_K]\subset Y_K$.
In addition, we can obtain $R^{-1}[Y_K]\subset X_K$ by $\dis(R^{-1})=\dis(R)$ similarly.
Therefore we can define $R_K\in\GHC(X_K,Y_K)$ by $R_K=R\cap (X_K\times Y_K)\in\GHC(X_K,Y_K)$.
Since $R_K\subset R$,
\[
S=\dGH(X_K,Y_K)\leq\frac12\dis(R_K)\leq\frac12\dis(R)<S\,,
\]
which is a contradiction. Hence $\dGH(\Sum_r(X),\Sum_r(Y))=S$.
\end{proof}
\begin{xrem}
The metric space $\Sum_r(X)$ is the disjoint union of $X_1,X_2,\ldots.$
Let $\X$ be a subset of $\M$ such that $\bigsqcup_{0\leq\lambda\leq\omega+1} X_\lambda\in\M$ for $X_1,X_2,\ldots\in\X$ and $X_0=X_\omega=X_{\omega+1}=\one$.
Then, putting $\X_r=\Set{X\in \X}{\diam(X)\leq r}$, we can isometrically embed $\prod_{n=1}^\infty \X_{r_n}$ into $\X$.
For example, 
\[
\W_a=\Set{X\in\M}{\mathrm{the\ Hausdorff\ dimension\ of}\ X\ \mathrm{is}\ a}\,,\quad a\geq0\,.
\]
Now, we define $\dim_H(X)$ as the Hausdorff dimension of $X$. 
By the countable stability of the Hausdorff dimension, we have
\[
\dim_H(\Sum_r(X))
=\sup_{0\leq\lambda\leq\omega+1}\dim_H(X_\lambda)
=\sup_{n=1,2,\ldots}\dim_H(X_n)\,,
\]
that is $\Sum_r(X)\in\W_a$ for $X=(X_n)_{n=1}^\infty\in\prod_{n=1}^\infty(\W_a)_{r_n}$.
Therefore, $\prod_{n=1}^\infty(\W_a)_{r_n}$ can be isometrically embedded into $\W_a$.
\end{xrem}
\begin{proof}[Proof of Corollary \ref{theorem2}]
Let $\two=\{-1,1\}$ be a subspace of the Euclidean space $\R$. 
Then, for $s,t>0$,
\[
\dGH(s\two,t\two)=|s-t|\frac12\diam(\two)=|s-t|\frac{|1-(-1)|}2=|s-t|.
\]
Hence, we can isometrically embed $\Set{(x_n)_{n=1}^\infty}{0\leq x_n\leq r_n}$ into $\prod_{n=1}^\infty\B_{\leq r_n}$, and into $\B_{36c-3r_1}\subset\M$ by Theorem \ref{theorem1}.
\end{proof}
\section{Acknowledgments}
\*\indent This research was financially supported by the Kansai University Grant-in-Aid for research progress in graduate course, 2025.
The author acknowledges this support.
Dr.Yoshito Ishiki, Professor Toshihiro Shoda, and Supervisor Atsushi Fujioka reviewed his paper and encouraged him.
The author thanks them for their kindness.

Takuma Byakuno: the Graduate School of Science and Engineering, Kansai University, 3-3-35 Suita, 564-8680 Osaka, Japan, E-mail: k241344@kansai-u.ac.jp

\end{document}